\documentclass{article}
%%%%%%%%%%%%%%%%%%%%%%%%%%%%%%%%%%%%%%%%%%%%%%%%%%%%%%%%%%%%%%%%%%%%%%%%%%%%%%%%%%%%%%%%%%%%%%%%%%%%%%%%%%%%%%%%%%%%%%%%%%%%%%%%%%%%%%%%%%%%%%%%%%%%%%%%%%%%%%%%%%%%%%%%%%%%%%%%%%%%%%%%%%%%%%%%%%%%%%%%%%%%%%%%%%%%%%%%%%%%%%%%%%%%%%%%%%%%%%%%%%%%%%%%%%%%
\usepackage{amssymb}
\usepackage{graphicx}
\usepackage{amsmath}
\usepackage{algorithmic}
\usepackage{caption}

\setcounter{MaxMatrixCols}{10}
%TCIDATA{OutputFilter=LATEX.DLL}
%TCIDATA{Version=5.00.0.2552}
%TCIDATA{<META NAME="SaveForMode" CONTENT="1">}
%TCIDATA{Created=Mon Jul 29 17:57:09 2002}
%TCIDATA{LastRevised=Monday, April 01, 2019 21:53:11}
%TCIDATA{<META NAME="GraphicsSave" CONTENT="32">}
%TCIDATA{<META NAME="DocumentShell" CONTENT="Journal Articles\Standard LaTeX Article">}
%TCIDATA{Language=American English}
%TCIDATA{CSTFile=LaTeX article (bright).cst}

\newtheorem{theorem}{Theorem}

\newtheorem{definition}[theorem]{Definition}
\newtheorem{example}[theorem]{Example}

\newtheorem{proposition}[theorem]{Proposition}
\newtheorem{remark}[theorem]{Remark}

\newenvironment{proof}[1][Proof]{\textbf{#1.} }{\ \rule{0.5em}{0.5em}}

\begin{document}
\title{Exponentially $E$-preinvex and $E$-invex functions in mathematical programming}
\date{}
\author{Najeeb Abdulaleem  \\
%EndAName
Department of Mathematics, Mahrah University, Al-Mahrah, Yemen\\
Faculty of Mathematics and Computer Science, University of \L \'{o}d\'{z},\\  Banacha 22, 90-238 \L \'{o}d\'{z}, Poland\\
Department of Mathematics, Hadhramout University\\ Al-Mahrah, Yemen \\
e-mail:  nabbas985@gmail.com}
\maketitle

\begin{abstract}
In this paper, we introduce a new concept of generalized convexity for $E$-differentiable vector optimization problems. Namely, the notion of exponentially $E$-invexity is defined. Further, some properties and results of exponentially $E$-invex functions are studied. The sufficient optimality conditions are derived under appropriate (generalized) exponentially $E$-invexity hypotheses. To exemplify the application of our proposed concept, we have included an appropriate example.

\textsc{Key Words:} 
Exponentially $E$-invex function;  $E$-differentiable vector optimization; optimality conditions.

\textbf{AMS Classification}\textsc{:} 90C26, 90C29, 90C30, 90C46
\end{abstract}

\section{Introduction}
Multiobjective optimization serves as a valuable mathematical framework to address real-world challenges involving conflicting objectives found in engineering, economics, and decision-making. However, many studies have traditionally assumed convexity in these problems (see, for example, \cite{Kanniappan1983}, \cite{Singh1987}, \cite{Weir1987}). To broaden the scope beyond convexity assumptions in theorems related to optimality conditions and duality, various concepts of generalized convexity have been introduced (see, for example, \cite{Alizadeh1}, \cite{Alizadeh2}, \cite{Najeeb2022VEinvex}, \cite{Najeeb2023univex}, \cite{Abdulaleemtyp}, \cite{Abdulaleem2019}, \cite{Avriel1972}, \cite{JeyakumarM},
and others). One particularly beneficial generalization is invexity, as introduced by Hanson \cite{Hanson}. This involves considering differentiable functions, denoted as $f: M\rightarrow R,$ where $M$ is a subset of $R^n.$ For these functions, Hanson proposes the existence of an n-dimensional vector function $\eta: M\times M \rightarrow R^n$ such that, for
all $x,u \in M,$ the inequality
\begin{equation*}
f(x) - f(u) \geq \nabla f(u) \eta(x, u)
\end{equation*}
holds. Ben Israel and Mond \cite{Israel}, Hanson and Mond \cite{HansonMond},  Craven and Glover \cite{CravenGlover},  along with numerous others, have explored various aspects, applications, and broader concepts related to these functions.

Youness \cite{youness1999} initially introduced the concept of $E$-convexity. Recently, there has been considerable interest in expanding the idea of $E$-convexity to novel classes of generalized $E$-convex functions, and researchers have investigated their characteristics (see, for example, \cite{Najeeb20191},    \cite{AntczakNajeeb2019s},  \cite{Abdulaleemtyp}, \cite{Abdulaleem2019}, \cite{Fulga2009}, \cite{Hu2007}, \cite{IqbalAhmadAli}, \cite{IqbalAliAhmad}, \cite{KiliSaleh2015},   \cite{Mirzapour}, \cite{Nanda}, \cite{PiaoJiao}, \cite{SyauLee},  \cite{SoleimaniDamaneh}, \cite{Yang}, \cite{AntczakNajeeb2020},  and others). Antczak \cite{Antczak2001} discussed the applications of exponentially convex functions in the mathematical programming and optimization theory. Following the research by Hanson and Craven, various forms of differentiable functions have emerged, aiming to extend the concept of invex functions. One such function involves exponential functions (see, for example, \cite{Alirezaei}, \cite{Avriel1972}, \cite{Noor}, \cite{NoorNoor2019}, \cite{NoorNoore2019}, \cite{NoorNoore2020}, \cite{Pecaric}, \cite{Pal2018}, and others).

In this paper, a new class of nonconvex $E$-differentiable vector optimization problems
with both inequality and equality constraints is considered in which the involved functions are exponentially (generalized) $E$-invex. Therefore, the concepts of exponentially pseudo-$E$-invex and exponentially quasi-$E$-invex functions for $E$-differentiable vector optimization
problems are introduced. Furthermore, we derive the sufficiency of the so-called $E$-Karush-Kuhn-Tucker optimality conditions for the considered $E$-differentiable vector optimization problem under appropriate exponentially (generalized) $E$-invexity hypotheses.
 This result is illustrated by suitable example of smooth multiobjective optimization problem in which the involved functions are exponentially (generalized) $E$-invex functions.

\section{Preliminaries}
Throughout this paper, the following conventions vectors $x=\left(
x_{1},x_{2},...,x_{n}\right) ^{T}$ and\ $y=\left(
y_{1},y_{2},...,y_{n}\right) ^{T}$ in $R^{n}$ will be followed:

(i) \ \ \ $x=y$ \ if and only if $x_{i}=y_{i}$ for all $i=1,2,...,n$;

(ii)\ \ \ $x>y$ \ if and only if $x_{i}>y_{i}$ for all $i=1,2,...,n$;

(iii) \ $x\geqq y$ \ if and only if $x_{i}\geqq y_{i}$ for all $i=1,2,...,n$;

(iv)\ \ $x\geq y$ \ if and only if $x_{i}\geqq y_{i}$ for all $i=1,2,...,n$
but $x\neq y$;

(v) \ \ $x\ngtr y$ \ is the negation of $x > y.$

\begin{definition}\cite{Najeeb20191}\label{def E-invex set}
Let $E:R^{n}\rightarrow R^{n}$. A set $M\subseteq R^{n}$ is said to be an $E$-invex set  if and only if there exists a vector-valued function $\eta: M\times M\rightarrow R^{n}$ such that the  relation%
\begin{equation*}
E\left(x_{0}\right) +\tau \eta \left( E\left( x\right),E\left(x_{0}\right) \right)
\in M
\end{equation*}
holds for all $x,x_{0}\in M$ and any $\tau \in \left[ 0,1\right] $.
\end{definition}

\begin{remark}\label{re1}
If $\eta$  is a vector-valued function defined by $\eta(z,y)=z-y$, then the definition of an $E$-invex set reduces to the definition of an $E$-convex set (see Youness \cite{youness1999}).
\end{remark}

\begin{definition}
 \label{def. E-convex function}Let $E:R^{n}\rightarrow R^{n}$. A function $f:M\rightarrow R$ is said to be $E$-preinvex  on $M$ if and only if the following inequality%
\begin{equation}
f\left( E\left(x_{0}\right) +\tau \eta \left( E\left( x\right),E\left(x_{0}\right) \right)
\right) \leqq \tau  f\left( E\left( x\right) \right) +\left( 1-\tau
\right) f\left( E\left(x_{0}\right) \right)  \label{2}
\end{equation}%
holds for all $x,x_{0}\in M$ and any $\tau  \in \left[ 0,1\right] $.
\end{definition}

Now, we introduce a new  concept of the exponentially $E$-preinvex  function.

\begin{definition}
 \label{def e-E-convex function}Let $E:R^{n}\rightarrow R^{n}.$ A  function $%
f:M\rightarrow R$ is said to be exponentially $E$-preinvex   on $M$ if and only if the following inequality%
\begin{equation}
e^{f\left( E\left(x_{0}\right) +\tau \eta \left( E\left( x\right),E\left(x_{0}\right) \right)
\right)} \leqq \tau  e^{ f\left( E\left( x\right) \right)} +\left( 1-\tau
\right) e^{f\left( E\left(x_{0}\right) \right)}  \label{2e}
\end{equation}%
holds for all $x,x_{0}\in M$ and any $\tau  \in \left[ 0,1\right] $.

In other words, \eqref{2e} is equivalent to the fact that the following inequalities

\begin{equation}
f\left( E\left(x_{0}\right) +\tau \eta \left( E\left( x\right),E\left(x_{0}\right) \right)
\right) \leqq \log \left[\tau  e^{ f\left( E\left( x\right) \right)} +\left( 1-\tau
\right) e^{f\left( E\left(x_{0}\right) \right)}\right]  \label{3e}
\end{equation}%
holds for all $x,x_{0}\in M$ and any $\tau  \in \left[ 0,1\right] $.
\end{definition}

\begin{definition}
\label{def strictly e-E-convex function}Let $E:R^{n}\rightarrow R^{n}.$  A  function $%
f:M\rightarrow R$ is said to be strictly exponentially $E$-preinvex  on $M$ if and only if the following
inequality%
\begin{equation}
e^{f\left( E\left(x_{0}\right) +\tau \eta \left( E\left( x\right),E\left(x_{0}\right) \right)
\right)} <\tau  e^{f\left( E\left( x\right) \right)} +\left( 1-\tau  \right)
e^{f\left( E\left(x_{0}\right) \right)}  \label{3}
\end{equation}%
holds for all $x,x_{0}\in M$, $E(x)\neq E(x_{0})$, and any $\tau  \in \left(
0,1\right) $.
\end{definition}

\begin{definition}
\label{def e-quasi-E-convex function}Let $E:R^{n}\rightarrow R^{n}.$  A function $%
f:M\rightarrow R$ is said to be exponentially quasi-$E$-preinvex  on $M$ if and only if the following
inequality%
\begin{equation}
e^{f\left( E\left(x_{0}\right) +\tau \eta \left( E\left( x\right),E\left(x_{0}\right) \right)
\right)} \leqq \;\max \;\{e^{f(E(x))},e^{f( E(x_{0}))}\}  \label{4}
\end{equation}%
holds for all $x,x_{0}\in M$ and any $\tau  \in \left[ 0,1\right] $.
\end{definition}

\begin{definition}
\label{def e-quasi-E-convex function}Let $E:R^{n}\rightarrow R^{n}.$  A function $%
f:M\rightarrow R$ is said to be strictly exponentially quasi-$E$-preinvex   on $M$ if and only if the following
inequality%
\begin{equation}
e^{f\left( E\left(x_{0}\right) +\tau \eta \left( E\left( x\right),E\left(x_{0}\right) \right)
\right)} < \;\max\; \{e^{f(E(x))},e^{f( E(x_{0}))}\}  \label{4}
\end{equation}%
holds for all $x,x_{0}\in M (x\neq x_{0}),$ and any $\tau  \in \left( 0,1\right) $.
\end{definition}

Note that every exponentially preinvex function is exponentially quasi-$E$-preinvex  and every exponentially  $E$-preinvex  function is exponentially  quasi-$E$-preinvex. However, the converse is not true.

\begin{definition}
Let $E:R^{n}\rightarrow R^{n}.$ We define the $E$-epigraph of an exponentially function $f:M\rightarrow R$ as follows
\begin{equation*}
\text{epi$_{E}$}(f)=\{(E(x),\Upsilon)\in M\times R: e^{f(E(x))}\leq \Upsilon\}.
\end{equation*}
\end{definition}

\begin{theorem}
Let $E:R^{n}\rightarrow R^{n},$ $M$ be a nonempty $E$-invex subset of $R^{n}$ and $f:M\rightarrow R$ be an exponentially function. Then $f$ is $E$-preinvex  if and only if its $E$-epigraph is an $E$-invex set.
\end{theorem}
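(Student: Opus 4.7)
The plan is to prove this by unwinding definitions in both directions, first setting up the natural ambient structure in which the $E$-epigraph is viewed as a subset of $R^{n+1}$. I would extend the map $E$ to $\widetilde{E}:R^{n}\times R\rightarrow R^{n}\times R$ by $\widetilde{E}(y,\Upsilon)=(E(y),\Upsilon)$, and extend $\eta$ to $\widetilde{\eta}((a_{1},b_{1}),(a_{2},b_{2}))=(\eta(a_{1},a_{2}),\,b_{1}-b_{2})$. Under this extension, asserting that $\text{epi}_{E}(f)$ is an $E$-invex subset of $R^{n+1}$ reduces to the single condition that, for any $(E(x),\Upsilon_{1}),(E(x_{0}),\Upsilon_{2})\in\text{epi}_{E}(f)$ and any $\tau\in[0,1]$,
\begin{equation*}
\bigl(E(x_{0})+\tau\eta(E(x),E(x_{0})),\;\tau\Upsilon_{1}+(1-\tau)\Upsilon_{2}\bigr)\in\text{epi}_{E}(f).
\end{equation*}

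For the forward direction, assume $f$ is exponentially $E$-preinvex and pick two epigraph points as above. The first coordinate of the candidate convex-like combination lies in $M$ because $M$ is $E$-invex, so the membership in $M\times R$ is immediate. For the defining inequality, I would apply \eqref{2e} and then use the epigraph conditions $e^{f(E(x))}\leq\Upsilon_{1}$ and $e^{f(E(x_{0}))}\leq\Upsilon_{2}$ to chain
\begin{equation*}
e^{f(E(x_{0})+\tau\eta(E(x),E(x_{0})))}\leq \tau e^{f(E(x))}+(1-\tau)e^{f(E(x_{0}))}\leq \tau\Upsilon_{1}+(1-\tau)\Upsilon_{2},
\end{equation*}
which gives exactly the epigraph membership.

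For the converse, assume the $E$-epigraph is $E$-invex. Fix any $x,x_{0}\in M$ and $\tau\in[0,1]$, and observe that $(E(x),e^{f(E(x))})$ and $(E(x_{0}),e^{f(E(x_{0}))})$ both lie in $\text{epi}_{E}(f)$ by construction (the defining inequality is an equality for these distinguished points). Applying the $E$-invexity of the epigraph to these two points with parameter $\tau$ yields that $(E(x_{0})+\tau\eta(E(x),E(x_{0})),\,\tau e^{f(E(x))}+(1-\tau)e^{f(E(x_{0}))})$ lies in $\text{epi}_{E}(f)$, which, translated back via the defining inequality of the $E$-epigraph, is precisely the exponentially $E$-preinvex inequality \eqref{2e}.

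The step I expect to require the most care is not the computation itself — which is essentially a one-line implication on each side — but justifying the extension of the operator $E$ and the selector $\eta$ to the product space $R^{n}\times R$ so that the notion of ``$E$-invex set'' genuinely applies to $\text{epi}_{E}(f)$. In particular, I need the identity action on the $\Upsilon$-coordinate to produce affine combinations, and I need to confirm that the canonical epigraph points $(E(x),e^{f(E(x))})$ actually furnish enough data to reverse the implication for arbitrary $x,x_{0}\in M$. Beyond that, the argument is a direct bookkeeping exercise in the two definitions.
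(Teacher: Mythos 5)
Your proof is correct and follows essentially the same route as the paper: the forward direction chains the exponential $E$-preinvexity inequality with the two epigraph bounds, and the converse applies the epigraph's $E$-invexity to the canonical points $(E(x),e^{f(E(x))})$ and $(E(x_{0}),e^{f(E(x_{0}))})$. The only difference is that you make explicit the extension of $E$ and $\eta$ to $R^{n}\times R$ — a point the paper leaves implicit — which is a reasonable clarification rather than a deviation in method.
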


\begin{proof}
Let $f$ be an exponentially $E$-preinvex  function. Then for any $(E(x),\Upsilon)$  and $(E(x_{0}),\Gamma)\in$ \text{epi$_{E}$}($f$), we have
$e^{f(E(x))}\leqq \Upsilon,$ $e^{f(E(x_{0}))}\leqq \Gamma$ . Also, for each $\tau \in [0,1]$, we have
\begin{equation}
\begin{split}
e^{f\left( E\left(x_{0}\right) +\tau \eta \left( E\left( x\right),E\left(x_{0}\right) \right)
\right)} &\leqq \tau  e^{ f\left( E\left( x\right) \right)} +\left( 1-\tau
\right) e^{f\left( E\left(x_{0}\right) \right)} \\
&\leqq \tau  \Upsilon +\left( 1-\tau
\right) \Gamma.
\end{split}
\end{equation}
Thus,
$(E\left(x_{0}\right) +\tau \eta \left( E\left( x\right),E\left(x_{0}\right) \right) , \tau \Upsilon + (1-\tau) \Gamma )  \in$ \text{epi$_{E}$}($f$).
Hence, \text{epi$_{E}$}($f$) is $E$-invex.

Conversely, let \text{epi$_{E}$}($f$)  be an $E$-invex set,  $(E(x),e^{f(E(x))}) \in$ \text{epi$_{E}$}($f$)   and $(E(x_{0}),e^{f(E(x_{0}))}) \in$ \text{epi$_{E}$}($f$). Then for each $E(x),E(x_{0})\in M$ and each $\lambda \in [0,1]$, we have
%\begin{equation}
%(E\left(x_{0}\right) , e^{f(E\left(x_{0}\right))})+\tau \eta ((E\left( x\right),e^{f(E\left( x\right))}),(E\left(x_{0}\right) , %e^{f(E\left(x_{0}\right))}))\in \text{epi}_{E}(f).
%\end{equation}
%This implies that
\begin{equation}
(E\left(x_{0}\right) +\tau \eta \left( E\left( x\right),E\left(x_{0}\right) \right), \tau  e^{f(E\left( x\right))} +(1-\tau ) e^{f(E\left(x_{0}\right))})\in \text{epi}_{E}(f)
\end{equation}
and thus,
\begin{equation}
e^{f\left( E\left(x_{0}\right) +\tau \eta \left( E\left( x\right),E\left(x_{0}\right) \right)
\right)} \leqq \tau  e^{ f\left( E\left( x\right) \right)} +\left( 1-\tau
\right) e^{f\left( E\left(x_{0}\right) \right)}.
\end{equation}
Hence,  $f$ is exponentially $E$-preinvex  on $M.$
\end{proof}

We now give the characterization of an exponentially quasi-$E$-preinvex function in terms of $E$-preinvexity
of its level sets.

\begin{theorem}
Let $E:R^{n}\rightarrow R^{n} $ and $M$ be a nonempty $E$-invex subset of $R^{n}.$  A function $f:M\rightarrow R$ is  exponentially quasi-$E$-preinvex function if and only if the  level sets $L_{E}(f,\Upsilon)$ are  $E$-invex for all $\Upsilon \in R.$
\end{theorem}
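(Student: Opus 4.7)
The plan is to prove each direction by a direct unwinding of the two definitions involved, namely Definition~\ref{def e-quasi-E-convex function} of the exponentially quasi-$E$-preinvex property and Definition~\ref{def E-invex set} of an $E$-invex set, with the standard reading $L_{E}(f,\Upsilon)=\{x\in M:e^{f(E(x))}\leqq \Upsilon\}$.

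For the forward direction, I would fix $\Upsilon\in R$ and take two arbitrary points $x,x_{0}\in L_{E}(f,\Upsilon)$, so that by definition $e^{f(E(x))}\leqq \Upsilon$ and $e^{f(E(x_{0}))}\leqq \Upsilon$. For any $\tau\in[0,1]$, applying the exponentially quasi-$E$-preinvex inequality (the same $\eta$ that witnesses $E$-invexity of $M$) yields
\[
e^{f\left(E(x_{0})+\tau\eta(E(x),E(x_{0}))\right)}\leqq \max\{e^{f(E(x))},e^{f(E(x_{0}))}\}\leqq \Upsilon,
\]
so the point $E(x_{0})+\tau\eta(E(x),E(x_{0}))$ lies in $L_{E}(f,\Upsilon)$, establishing its $E$-invexity.

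For the converse, I would pick any $x,x_{0}\in M$ and any $\tau\in[0,1]$, and choose the sharpest threshold $\Upsilon:=\max\{e^{f(E(x))},e^{f(E(x_{0}))}\}$. By construction both $x$ and $x_{0}$ belong to $L_{E}(f,\Upsilon)$, which is $E$-invex by hypothesis. Therefore $E(x_{0})+\tau\eta(E(x),E(x_{0}))\in L_{E}(f,\Upsilon)$, which translates back as
\[
e^{f\left(E(x_{0})+\tau\eta(E(x),E(x_{0}))\right)}\leqq \Upsilon=\max\{e^{f(E(x))},e^{f(E(x_{0}))}\},
\]
and this is precisely the condition \eqref{4} defining exponentially quasi-$E$-preinvexity.

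The argument is structurally the classical characterization of quasiconvex functions by convex sublevel sets, transported through $E$ and the exponential. Since both directions are immediate rewrites of the defining inequality, there is no genuine obstacle; the only point requiring mild care is to use one and the same $\eta$ throughout (the one coming from the $E$-invex structure on $M$) so that the ``for all $x,x_{0}$'' witness is shared between the function-level definition and the set-level definition, and to note that the reverse direction uses only a single level $\Upsilon$ per pair $(x,x_{0})$, not a universal $\Upsilon$.
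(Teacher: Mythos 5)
Your proof is correct and follows essentially the same route as the paper's: the forward direction bounds $e^{f(E(x_{0})+\tau\eta(E(x),E(x_{0})))}$ by the max and hence by $\Upsilon$, and the converse instantiates the level at $\overline{\Upsilon}=\max\{e^{f(E(x))},e^{f(E(x_{0}))}\}$ exactly as the paper does. Your closing remark about sharing a single $\eta$ between the set-level and function-level definitions is a sensible clarification that the paper leaves implicit.
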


\begin{proof}
Let $f$ be an exponentially quasi-$E$-preinvex function, and for $\Upsilon \in R$, let $E(x), E(x_{0})\in L_{E}(f,\Upsilon)$. Then
$e^{f(E(x))}\leqq \Upsilon,$ $e^{f(E(x_{0}))}\leqq \Upsilon$. Since $f$ is an exponentially quasi-$E$-preinvex  function, for each $\tau \in [0,1]$, we have
\begin{equation*}
e^{f\left( E\left(x_{0}\right) +\tau \eta \left( E\left( x\right),E\left(x_{0}\right) \right)
\right)} \leqq \;\max \;\{e^{f(E(x))},e^{f( E(x_{0}))}\}  \leqq \Upsilon
\end{equation*}%
that is, $ E\left(x_{0}\right) +\tau \eta \left( E\left( x\right),E\left(x_{0}\right) \right)\in L_{E}(f,\Upsilon)$ for each $\tau \in [0,1].$ Hence, $L_{E}(f,\Upsilon)$ is $E$-invex.

Conversely, let $E(x),E(x_{0})\in M$ and $\overline{\Upsilon}=\max \;\{e^{f(E(x))},e^{f( E(x_{0}))}\}.$ Then $E(x),E(x_{0})\in L_{E}(f,\overline{\Upsilon})$, and by $E$-invexity of $L_{E}(f,\overline{\Upsilon})$, we have $E\left(x_{0}\right) +\tau \eta \left( E\left( x\right),E\left(x_{0}\right) \right) \in L_{E}(f,\overline{\Upsilon})$ for each $\tau \in [0,1].$ Thus for  each $\tau \in [0,1]$,
\begin{equation*}
e^{f\left( E\left(x_{0}\right) +\tau \eta \left( E\left( x\right),E\left(x_{0}\right) \right)
\right)} \leqq  \overline{\Upsilon} =\max \;\{e^{f(E(x))},e^{f( E(x_{0}))}\}.
\end{equation*}
This completes the proof.
\end{proof}

\begin{definition}
\cite{MegahedGommaYounessBanna} \label{def E-differentiability}Let $E:R^{n}\rightarrow R^{n}$ and $f:M\rightarrow R$
be a (not necessarily) differentiable function at a given point $x_{0}\in M$. It is
said that $f$ is an $E$-differentiable function at $x_{0}$ if and only if $%
f\circ E$ is a differentiable function at $x_{0}$ (in the usual sense) and,
moreover,
\begin{equation*}
\left( f\circ E\right) \left( x\right) =\left( f\circ E\right) \left(
x_{0}\right) +\nabla \left( f\circ E\right) \left( x_{0}\right) \left( x-x_{0}\right)
+\theta \left(x_{0},x-x_{0}\right) \left\Vert x-x_{0}\right\Vert ,  \label{8}
\end{equation*}%
where $\theta \left( x_{0},x-x_{0}\right) \rightarrow 0$ as $x\rightarrow x_{0}$.
\end{definition}

Now, we introduce a new concept of generalized exponentially convexity for  $E$-differentiable functions.
\begin{definition}
\label{d differentiable ex E-invexity}
Let $E:R^{n}\rightarrow R^{n},$ $M$ be a nonempty open $E$-invex subset of $R^{n}$ and
 $f:M\rightarrow R$ be an  $E$-differentiable function at  $x_{0}$ on $M$. It is said that $f$ is  exponentially $E$-invex function  at  $x_{0}$ on $M$ if, there exist $\eta:M\times M\rightarrow R^{n}$  such that, for all $x \in M$, the inequality
\begin{equation}
e^{f\left( E\left( x\right) \right)} -e^{f\left( E\left(x_{0}\right) \right)} \geqq
\nabla f\left( E\left(x_{0}\right) \right)e^{f\left( E\left(x_{0}\right) \right)} \eta\left( E\left( x\right),E\left(x_{0}\right) \right) \text{ \ }  \label{9ed}
\end{equation}%
holds. If inequality \eqref{9ed} holds for any $x_{0}$ on $M$, then $f$ is exponentially $E$-invex function on $M.$
\end{definition}

\begin{definition}
\label{d differentiable st ex E-invexity}
Let $E:R^{n}\rightarrow R^{n},$ $M$ be a nonempty open $E$-invex subset of $R^{n}$ and
 $f:M\rightarrow R$ be an  $E$-differentiable function at  $x_{0}$ on $M$. It is said that $f$ is  exponentially strictly $E$-invex function  at  $x_{0}$ on $M$ if, there exist $\eta:M\times M\rightarrow R^{n}$  such that, for all $x \in M$, $x\neq x_{0}$ the inequality
\begin{equation}
e^{f\left( E\left( x\right) \right)} -e^{f\left( E\left(x_{0}\right) \right)} >
\nabla f\left( E\left(x_{0}\right) \right)e^{f\left( E\left(x_{0}\right) \right)} \eta\left( E\left( x\right),E\left(x_{0}\right) \right) \text{ \ }  \label{9ed}
\end{equation}%
holds. If inequality \eqref{9ed} holds for any $x_{0}$ on $M,$ $x\neq x_{0}$, then $f$ is exponentially strictly $E$-invex function on $M.$
\end{definition}

Now, we give the necessary condition for an $E$-differentiable  exponentially $E$-invex function.

\begin{proposition}
Let $E:R^{n}\rightarrow R^{n},$    $f:M\rightarrow R$ be an $E$-differentiable exponentially $E$-invex function (an $E$-differentiable exponentially strictly $E$-invex function) on $M.$  Then, the following inequality%
\begin{equation}
(\nabla f\left( E\left( x\right) \right) e^{f\left( E\left( x\right) \right)}-\nabla f\left( E\left(x_{0}\right)
\right)e^{f\left( E\left(x_{0}\right) \right)}) \eta\left( E\left( x\right),E\left(x_{0}\right) \right) \geqq 0\text{ \ }%
\left( >\right)  \label{10}
\end{equation}%
holds for all $x, x_{0}\in  M$.
\end{proposition}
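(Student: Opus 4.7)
The plan is to obtain the claimed inequality by applying the defining inequality of exponential $E$-invexity twice (interchanging the roles of $x$ and $x_{0}$) and then adding.

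First, I would apply Definition \ref{d differentiable ex E-invexity} at the point $x_{0}$, tested against $x$, which yields
\begin{equation*}
e^{f(E(x))}-e^{f(E(x_{0}))}\geqq \nabla f(E(x_{0}))\,e^{f(E(x_{0}))}\,\eta(E(x),E(x_{0})).
\end{equation*}
Next, since $f$ is exponentially $E$-invex on all of $M$, I would apply the same definition at $x$, tested against $x_{0}$, obtaining
\begin{equation*}
e^{f(E(x_{0}))}-e^{f(E(x))}\geqq \nabla f(E(x))\,e^{f(E(x))}\,\eta(E(x_{0}),E(x)).
\end{equation*}
Adding these two inequalities eliminates the left-hand sides and produces
\begin{equation*}
0 \geqq \nabla f(E(x_{0}))\,e^{f(E(x_{0}))}\,\eta(E(x),E(x_{0})) + \nabla f(E(x))\,e^{f(E(x))}\,\eta(E(x_{0}),E(x)).
\end{equation*}

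To convert this symmetric expression into the single-$\eta$ form stated in the proposition, I would invoke the skew-symmetry convention $\eta(E(x_{0}),E(x)) = -\eta(E(x),E(x_{0}))$, which is standard in the $E$-invexity literature and implicit throughout the paper. Substituting this identity, the right-hand side becomes
\begin{equation*}
\bigl(\nabla f(E(x_{0}))\,e^{f(E(x_{0}))}-\nabla f(E(x))\,e^{f(E(x))}\bigr)\eta(E(x),E(x_{0})),
\end{equation*}
and multiplying through by $-1$ gives the asserted inequality. For the strict version, at least one of the two defining inequalities is strict whenever $x\neq x_{0}$, so the summed inequality (and hence the final form) is strict as well.

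The main obstacle is justifying the skew-symmetry of $\eta$: without it, the cleanest statement one can obtain directly is the two-term expression above, not the compact difference-of-gradients form. I would therefore either state skew-symmetry explicitly as a standing hypothesis on $\eta$ or cite its routine use in the cited references, after which the remaining computation is purely algebraic and presents no further difficulty.
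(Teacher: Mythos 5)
Your proposal is correct and follows essentially the same route as the paper: apply the defining inequality of exponential $E$-invexity at $x_{0}$ and then at $x$ with the roles interchanged, and add. You are in fact more careful than the paper, which writes $\eta(E(x),E(x_{0}))$ in \emph{both} inequalities (so that literal addition would yield a sum rather than the stated difference of gradient terms) and never mentions the skew-symmetry $\eta(E(x_{0}),E(x))=-\eta(E(x),E(x_{0}))$ that your argument rightly identifies as the missing hypothesis needed to reach the compact form of the conclusion.
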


\begin{proof}
Since $f$ is an $E$-differentiable exponentially $E$-invex function, by Definition \ref{d differentiable ex E-invexity}, the following inequalities
\begin{equation}
e^{f\left( E\left( x\right) \right)} -e^{f\left( E\left( x_{0}\right) \right)} \geqq
\nabla f\left( E\left( x_{0}\right) \right)e^{f\left( E\left( x_{0}\right) \right)} \eta\left( E\left( x\right),E\left(x_{0}\right) \right) ,\text{ \ }\left( >\right)
\end{equation}
\begin{equation}
e^{f\left( E\left( x_{0}\right) \right)} -e^{f\left( E\left( x\right) \right)} \geqq
\nabla f\left( E\left( x\right) \right)e^{f\left( E\left( x\right) \right)} \eta\left( E\left( x\right),E\left(x_{0}\right) \right) \text{ \ }\left( >\right)
\end{equation}
hold for all $x, x_{0} \in M.$ By adding above inequalities, we get that the following inequality%
\begin{equation}
(\nabla f\left( E\left( x\right) \right)e^{f\left( E\left( x\right) \right)} -\nabla f\left( E\left( x_{0}\right)
\right)e^{f\left( E\left( x_{0}\right) \right)}) \eta\left( E\left( x\right), E\left( x_{0}\right) \right) \geqq 0\text{ \ }%
\left( >\right)
\end{equation}%
holds for all $x, x_{0}\in M$.
\end{proof}

Now, we introduce the definitions of  generalized exponentially $E$-invex
functions. Namely, the following result gives the characterization of an exponentially quasi-$E$-invex function in terms of its
gradient.

\begin{definition}
\label{def quasi E-convex function E-diff}Let $E:R^{n}\rightarrow R^{n}$ and $f:M\rightarrow R$ be an  $E$-differentiable function at $x_{0}\in M$. The function $f$ is said to be an exponentially quasi-$E$-invex
 at $x_{0}$ on $M$ if the relation
\begin{equation}
e^{f\left( E\left( x\right) \right)} \leqq e^{f\left( E\left(x_{0}\right) \right)} \Longrightarrow \nabla \left( f\circ E\right) \left(x_{0}\right) e^{f\left( E\left(x_{0}\right) \right)}
\eta\left( E\left( x\right),E\left(x_{0}\right) \right) \leqq 0  \label{13}
\end{equation}%
holds for each $x\in M$. If (\ref{13}) is satisfied for every $x_{0}\in M$, then the function $f$ is said to be an exponentially quasi-$E$-invex on $M$.
\end{definition}

\begin{definition}
\label{def pseudo e-E-convex function}Let $E:R^{n}\rightarrow R^{n}$ and $f:M\rightarrow R$ be an
$E$-differentiable function at $x_{0}\in M$. The function $f$ is said to be an exponentially pseudo-$E$-invex
 at $x_{0}$ on $M$ if the  relation%
\begin{equation}
e^{f\left( E\left( x\right) \right)} <e^{f\left( E\left(x_{0}\right) \right)} \Longrightarrow \nabla \left( f\circ E\right) \left(x_{0}\right) e^{f\left( E\left(x_{0}\right) \right)}
\eta\left( E\left( x\right),E\left(x_{0}\right) \right) <0  \label{11}
\end{equation}%
holds for each $x\in M$. If (\ref{11}) is satisfied for every $x_{0}\in M$, then the function $f$ is said to be an exponentially pseudo-$E$-invex on $M$.
\end{definition}

\begin{definition}
\label{def strictly pseudo E-convex function}Let $E:R^{n}\rightarrow R^{n}$ and $f:M\rightarrow R$ be an
$E$-differentiable function at $x_{0}\in M$. The function $f$ is said to be an exponentially strictly pseudo-$E$-invex  at $x_{0}$ on $M$ if the relation%
\begin{equation}
e^{f\left( E\left( x\right) \right)} \leqq e^{f\left( E\left(x_{0}\right) \right)}\Longrightarrow \nabla \left( f\circ E\right) \left(x_{0}\right) e^{f\left( E\left(x_{0}\right) \right)}
\eta\left( E\left( x\right),E\left(x_{0}\right) \right) <0  \label{12}
\end{equation}%
holds for each $x, x_{0}\in M$, $ x \neq x_{0}$. If (\ref%
{12}) is satisfied for every $x_{0}\in M$, $x\neq x_{0}$, then the function $f$ is said to be an exponentially
strictly pseudo-$E$-invex on $M$.
\end{definition}

Now, we present an example of such an exponentially pseudo-$E$-invex
function which is not exponentially $E$-invex.

\begin{example}
Let $E:R\rightarrow R$ and $f:R \rightarrow R$ be a nondifferentiable function at $x=-3$ defined by $%
f(x)=(x+3)^{\frac{1}{3}},$  $E(x)=(x+3)^{9}-3$ and $\eta(E(x),E(x_{0}))=x-x_{0}.$ The function $\left( f\circ E\right) (x)=(x+3)^{3}$ is a
differentiable function at $x=-3$, thus $f$ is an $E$-differentiable
function at $x=-3$.
Now we show that $f$ is exponentially pseudo-$E$-invex on $R$. Let $x,x_{0}\in R$ and $%
\tau \in \lbrack 0,1]$, and assume that $e^{\left( f\circ E\right) \left(
x\right)} <e^{\left( f\circ E\right) \left( x_{0}\right)}$. We have $e^{\left( f\circ
E\right) \left( x\right)} =e^{(x+3)^{3}}<e^{(x_{0}+3)^{3}}=e^{\left( f\circ E\right) \left(x_{0}\right)}$. This inequality implies that $x<x_{0}$. Hence, we have $$\nabla
\left( f\circ E\right) \left( x_{0}\right) e^{\left( f\circ E\right) \left( x_{0}\right)} \eta \left( E\left( x\right),E\left( x_{0}\right) \right) =3(x_{0}+3)^{2}e^{(x_{0}+3)^{3}}(x-x_{0})<0 .$$ Therefore, by
Definition \ref{def pseudo e-E-convex function}, $f$ is exponentially pseudo-$E$-invex on $R$.\newline
Further, it can be shown that $f$ is also exponentially quasi-$E$-invex on $R$. Assume
that $e^{\left( f\circ E\right) \left( x\right)} \leqq e^{\left( f\circ E\right)\left(x_{0}\right)}$. We have $e^{\left( f\circ E\right) \left( x\right)}
=e^{\left( f\circ E\right) \left(x_{0}\right)}=e^{(x+3)^{3}}\leqq e^{(x_{0}+3)^{3}}$. This
inequality implies that $x\leqq x_{0}$. Hence, we have
 $$\nabla \left( f\circ E\right) \left( x_{0}\right) e^{\left( f\circ E\right) \left( x_{0}\right)}\eta\left( E\left( x\right),E\left( x_{0}\right) \right)
=3(x_{0}+3)^{2}e^{(x_{0}+3)^{3}}(x-x_{0})\leqq 0.$$ Therefore, by Definition \ref{def
quasi E-convex function E-diff}, $f$ is exponentially quasi-$E$-invex on $R$.
\end{example}

\begin{definition}
\label{def E-minimizer} Let $E:R^{n}\rightarrow R^{n}$. It said that
 $\overline{x}\in R^{n}$ is a global $E$-minimizer of $%
f:M\rightarrow R$ if the inequality%
\begin{equation*}
f\left( E\left( \overline{x}\right) \right) \leqq f\left( E\left( x\right)
\right)
\end{equation*}%
holds for all $x\in M$.
\end{definition}

\begin{proposition}
Let $E:R^{n}\rightarrow R^{n}$  and $f:M\rightarrow R$
be an $E$-differentiable exponentially $E$-invex
function on $M.$   If $\nabla f\left(E(\overline{x})\right)=0$, then $\overline{x}$ is an $E$-minimizer of $f$.
\end{proposition}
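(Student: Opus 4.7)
The plan is to apply the defining inequality of exponential $E$-invexity directly at the point $\overline{x}$ and then exploit the vanishing gradient hypothesis. Concretely, since $f$ is an $E$-differentiable exponentially $E$-invex function on $M$, Definition \ref{d differentiable ex E-invexity} guarantees a vector-valued function $\eta:M\times M\rightarrow R^{n}$ such that, for every $x\in M$,
\begin{equation*}
e^{f\left( E\left( x\right) \right)}-e^{f\left( E\left( \overline{x}\right) \right)}\geqq \nabla f\left( E\left( \overline{x}\right) \right) e^{f\left( E\left( \overline{x}\right) \right)}\eta \left( E\left( x\right) ,E\left( \overline{x}\right) \right).
\end{equation*}
The first step is to substitute the hypothesis $\nabla f\left( E(\overline{x})\right) =0$, which collapses the right-hand side to $0$ regardless of the choice of $\eta$ and of the positive factor $e^{f(E(\overline{x}))}$.

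The second step is to transport this inequality back to the level of $f$ itself. From $e^{f\left( E\left( x\right) \right) }\geqq e^{f\left( E\left( \overline{x}\right) \right) }$, the strict monotonicity of the real exponential function $t\mapsto e^{t}$ yields $f\left( E\left( x\right) \right) \geqq f\left( E\left( \overline{x}\right) \right) $ for every $x\in M$. By Definition \ref{def E-minimizer}, this is exactly the statement that $\overline{x}$ is a global $E$-minimizer of $f$, completing the argument.

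There is no genuine obstacle here: the proof is a one-line consequence of the exponential $E$-invexity inequality once the gradient is set to zero. The only point worth flagging is the implicit use of strict monotonicity of $\exp$ (so that a pointwise inequality on $e^{f(\cdot )}$ transfers to one on $f(\cdot )$), together with the fact that $e^{f(E(\overline{x}))}>0$, which ensures the right-hand side vanishes cleanly under the hypothesis. No invocation of the additional generalized notions (exponentially pseudo- or quasi-$E$-invexity) is needed, and the argument would also work verbatim for the strict case, except that one could not then conclude $\overline{x}$ is unique without the extra hypothesis $x\neq \overline{x}$.
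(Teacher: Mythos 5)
Your proof is correct and follows essentially the same route as the paper's: both apply the exponential $E$-invexity inequality at $\overline{x}$, use $\nabla f(E(\overline{x}))=0$ to kill the right-hand side, and pass from $e^{f(E(x))}\geqq e^{f(E(\overline{x}))}$ to $f(E(x))\geqq f(E(\overline{x}))$ via monotonicity of the exponential. Your explicit remark about the strict monotonicity of $\exp$ is a welcome clarification of a step the paper leaves implicit.
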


\begin{proof}
Let $E:R^{n}\rightarrow R^{n}$. Further, assume that $%
f:M\rightarrow R$ is an $E$-differentiable exponentially $E$-invex
function on $M.$  Hence,
by Definition \ref{d differentiable ex E-invexity}, the inequality%
\begin{equation}
e^{f\left( E\left( x\right) \right)} -e^{f\left( E\left(\overline{x}\right) \right)} \geqq
\nabla f\left( E\left(\overline{x}\right) \right)e^{f\left( E\left(\overline{x}\right) \right)}
\eta\left( E\left( x\right),E\left(\overline{x}\right) \right)\label{18cc}
\end{equation}%
holds for all $x\in M.$  Since  $\nabla f\left(E(\overline{x})\right)=0$ and (\ref{18cc}), therefore, we have that the relation
\begin{equation}
e^{f(E(x))}-e^{f(E(\overline{x}))}\geqq 0
\end{equation}
implies that the inequality
\begin{equation*}
f\left( E\left(\overline{x}\right) \right) \leqq f\left( E\left( x\right)
\right)
\end{equation*}%
holds for all $x\in M$. This means, by Definition \ref{def E-minimizer}, that $\overline{x}$ is an $E$-minimizer of
$f$.
\end{proof}

\begin{proposition}
Let $E:R^{n}\rightarrow R^{n}$ be an operator and $f:M\rightarrow R$
be an $E$-differentiable exponentially pseudo-$E$-invex
function on $M.$   If $\nabla f\left(E(\overline{x})\right)=0$, then $\overline{x}$ is an $E$-minimizer of $f$.
\end{proposition}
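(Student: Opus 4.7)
The plan is to argue by contradiction using Definition \ref{def pseudo e-E-convex function} directly. Suppose $\overline{x}$ is not an $E$-minimizer of $f$. Then by Definition \ref{def E-minimizer} there exists some $x \in M$ such that $f(E(x)) < f(E(\overline{x}))$. Since the exponential is strictly increasing, this is equivalent to
\begin{equation*}
e^{f(E(x))} < e^{f(E(\overline{x}))}.
\end{equation*}
This is exactly the antecedent of the implication in Definition \ref{def pseudo e-E-convex function}, applied at the point $\overline{x}$.

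Since $f$ is exponentially pseudo-$E$-invex on $M$, invoking this implication yields
\begin{equation*}
\nabla (f\circ E)(\overline{x})\, e^{f(E(\overline{x}))}\, \eta\!\left(E(x),E(\overline{x})\right) < 0.
\end{equation*}
Next, I would use the chain rule, which is legitimate because $f$ is $E$-differentiable at $\overline{x}$ and hence $f\circ E$ is differentiable there in the usual sense (Definition \ref{def E-differentiability}). The chain rule gives $\nabla (f\circ E)(\overline{x}) = \nabla f(E(\overline{x}))\,\nabla E(\overline{x})$. The hypothesis $\nabla f(E(\overline{x})) = 0$ therefore forces $\nabla (f\circ E)(\overline{x}) = 0$, and the displayed strict inequality collapses to $0 < 0$, a contradiction. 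Hence no such $x$ exists and $\overline{x}$ is an $E$-minimizer.

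The step I expect to require the most care is the link between $\nabla f(E(\overline{x}))$ (as written in the hypothesis) and $\nabla (f\circ E)(\overline{x})$ (as appears in Definition \ref{def pseudo e-E-convex function}); the chain-rule argument above handles this cleanly, and this is also consistent with how the analogous proof for the exponentially $E$-invex case (preceding proposition) uses the hypothesis $\nabla f(E(\overline{x})) = 0$ to kill the right-hand side of inequality \eqref{18cc}. Everything else is a direct application of the definitions, so no nontrivial obstacle remains.
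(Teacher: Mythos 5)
Your argument is correct and is exactly the expansion of the paper's one-line proof (the paper merely says the result ``follows from Definitions \ref{def pseudo e-E-convex function} and \ref{def E-minimizer}''): negate the conclusion, exponentiate, fire the pseudo-$E$-invexity implication at $\overline{x}$, and observe that the vanishing gradient makes the strict inequality read $0<0$. The one step I would push back on is your chain-rule justification of $\nabla(f\circ E)(\overline{x})=\nabla f(E(\overline{x}))\,\nabla E(\overline{x})$: Definition \ref{def E-differentiability} only guarantees that $f\circ E$ is differentiable at $\overline{x}$ --- it explicitly allows $f$ itself to be nondifferentiable at $E(\overline{x})$, and it imposes no differentiability on $E$ --- so the chain rule is not actually available under the stated hypotheses. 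The intended reading, consistent with how the paper freely interchanges $\nabla f_{i}(E(\overline{y}))$ and $\nabla(f_{i}\circ E)(\overline{y})$ in Theorem \ref{KKT conditions for (VP)} and the sufficiency proofs, is that $\nabla f(E(\overline{x}))$ in the hypothesis is simply the paper's notation for $\nabla(f\circ E)(\overline{x})$; with that convention the contradiction is immediate and no chain rule is needed. Your proof is otherwise complete.
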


\begin{proof}
The proof of this proposition follows from Definitions \ref{def pseudo e-E-convex function} and \ref{def E-minimizer}.
\end{proof}

\section{$E$-optimality conditions for multiobjective programming problems}
{\text{ } \\}

Consider the following multiobjective programming problem (VP):
\begin{equation*}
\begin{array}{c}
\text{minimize }f(x)=\left( f_{1}\left( x\right) ,...,f_{p}\left( x\right)
\right) \medskip \\
\text{subject to }g_{k}(x)\leqq 0\text{, \ }k\in K=\left\{ 1,...,m\right\}
,\medskip \\
\hspace{0.67in}h_{j}(x)=0\text{, \ }j\in J=\left\{ 1,...,q\right\} ,\medskip
\end{array}%
\qquad \text{(VP)}
\end{equation*}%
where  $f_{i}:R^{n}\rightarrow R$$(i\in I=\{1,2,...,p\})$, $g_{k}:R^{n}\rightarrow R$ $(k\in K)$ and $%
h_{j}:R^{n}\rightarrow R$ $(j\in J)$, are $E$-differentiable functions defined on $R^{n}.$
Let
\begin{equation*}
\Omega :=\left\{ x\in R^{n}:g_{k}(x)\leqq 0\text{, \ }k\in K\text{, }h_{j}(x)=0%
\text{, \ }j\in J\right\}
\end{equation*}%
be the set of all feasible solutions of (VP). Further, by $K\left( x\right),$ the set of inequality constraint indices that are active at a feasible
solution $x$, that is, $K\left( x\right) =\left\{ k\in K:g_{k}(x)=0\right\}.$

Let $E : R^{n}\rightarrow R^{n}$ be a given one-to-one and onto operator. Now, for the $E$-differentiable vector optimization problem (VP), we define its associated differentiable vector optimization problem (VP$_{E}$) as follows:
\begin{equation*}
\begin{array}{c}
\text{minimize }f(E(x))=\left( f_{1}\left( E(x)\right) ,...,f_{p}\left( E(x)\right)
\right) \medskip \\
\text{subject to }g_{k}(E(x))\leqq 0\text{, \ }k\in K=\left\{ 1,...,m\right\}
,\medskip \\
\hspace{0.67in}h_{j}(E(x))=0\text{, \ }j\in J=\left\{ 1,...,q\right\}.\medskip
\end{array}%
\qquad \text{(VP$_{E}$)}
\end{equation*}
Let
$
\Omega_{E} :=\left\{ x\in R^{n}:g_{k}(E(x))\leqq 0\text{, \ }k\in K\text{, }h_{j}(E(x))=0%
\text{, \ }j\in J\right\}
$
be the set of all feasible solutions of (VP$_{E}$).

\begin{definition}
\label{def weak Pareto point} A feasible point $E(\overline{y})$ is said to be
a weak $E$-Pareto (weakly $E$-efficient) solution for (VP) if and only if there
exists no feasible point $E(x)$ such that
\begin{equation*}
f(E(x))<f(E(\overline{y}))\text{.}
\end{equation*}
\end{definition}

\begin{definition}
\label{def Pareto point} A feasible point $E(\overline{y})$ is said to be an
$E$-Pareto ($E$-efficient) solution for (VP) if and only if there exists no feasible
point $E(x)$ such that
\begin{equation*}
f(E(x))\leq f(E(\overline{y}))\text{.}
\end{equation*}
\end{definition}

\begin{remark}
Let $E(\overline{y})\in \Omega$  be an $E$-Pareto solution (a weak $E$-Pareto solution)  of the
problem (VP). Then, $\overline{y}\in \Omega_{E}$ is a Pareto solution (a weak Pareto solution) of the problem (VP$_{E}$).
\end{remark}

\begin{theorem}\cite{AntczakNajeeb2020}
\label{KKT conditions for (VP)}($E$-Karush-Kuhn-Tucker necessary optimality
conditions). Let  $E\left( \overline{y}\right) $ be a weak $E$-Pareto
solution of  (VP). Moreover, let  $f_{i}$ ($i\in I$),  $g_{k}$ ($k\in K$), and
$h_{j}$ ($j\in J$), be $E$-differentiable  and the Kuhn-Tucker constraint qualification be satisfied at $\overline{y}$. Then
there exist  $\overline{\tau }\in R^{p}$, $\overline{\rho  }\in R^{m}$ and $%
\overline{\xi }\in R^{q}$ such that%
\begin{equation}
\begin{array}{c}
\sum_{i=1}^{p}\overline{\tau }_{i}\nabla
 f_{i}(E (\overline{y}))+
\sum_{k=1}^{m}\overline{\rho  }_{k}\nabla g_{k}(
E(\overline{y}))+\sum_{j=1}^{q}\overline{\xi }_{j}\nabla
h_{j}( E (\overline{y}))=0\text{,}%
\end{array}
\label{31}
\end{equation}%
\begin{equation}
\overline{\rho  }_{k} g_{k}( E(\overline{y}))=0\text{, \ }%
k\in K\text{,}  \label{32}
\end{equation}%
\begin{equation}
\overline{\tau }\geq 0\text{, \
}\overline{\rho  }\geqq 0\text{.}  \label{33}
\end{equation}
\end{theorem}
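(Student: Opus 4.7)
The plan is to reduce the $E$-differentiable vector problem (VP) to the ordinarily differentiable problem (VP$_{E}$) and then invoke the classical multiobjective Karush--Kuhn--Tucker necessary optimality theorem. The remark immediately preceding the statement already observes the right reduction at the level of solutions, and the $E$-differentiability hypothesis is precisely what makes the classical theory available on the reduced problem.

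Concretely, I would proceed in three steps. First, since $E(\overline{y})$ is a weak $E$-Pareto solution of (VP), the remark gives that $\overline{y}\in\Omega_{E}$ is a weak Pareto solution of the associated problem (VP$_{E}$). Second, by Definition \ref{def E-differentiability}, $E$-differentiability of $f_{i}$, $g_{k}$, and $h_{j}$ at $\overline{y}$ means exactly that the compositions $f_{i}\circ E$, $g_{k}\circ E$, and $h_{j}\circ E$ are differentiable at $\overline{y}$ in the usual sense; thus (VP$_{E}$) is a classical smooth multiobjective program. Third, with the Kuhn--Tucker constraint qualification holding at $\overline{y}$ for (VP$_{E}$), I would apply the standard multiobjective KKT necessary optimality theorem (for instance, the version for weak Pareto solutions of smooth multiobjective programs with inequality and equality constraints) to produce vectors $\overline{\tau}\in R^{p}$, $\overline{\rho}\in R^{m}$, $\overline{\xi}\in R^{q}$ satisfying
\begin{equation*}
\sum_{i=1}^{p}\overline{\tau}_{i}\nabla(f_{i}\circ E)(\overline{y})+\sum_{k=1}^{m}\overline{\rho}_{k}\nabla(g_{k}\circ E)(\overline{y})+\sum_{j=1}^{q}\overline{\xi}_{j}\nabla(h_{j}\circ E)(\overline{y})=0,
\end{equation*}
together with $\overline{\rho}_{k}\,g_{k}(E(\overline{y}))=0$ for $k\in K$, $\overline{\tau}\geq 0$, and $\overline{\rho}\geqq 0$.

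Finally, I would rewrite the stationarity identity using the paper's shorthand, in which $\nabla f_{i}(E(\overline{y}))$ denotes $\nabla(f_{i}\circ E)(\overline{y})$ (the same convention implicit in the earlier definitions, e.g.\ \eqref{9ed}), and similarly for $g_{k}$ and $h_{j}$, to obtain exactly \eqref{31}--\eqref{33}.

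The main obstacle here is really just a bookkeeping check rather than a substantive mathematical difficulty: one must confirm that the Kuhn--Tucker constraint qualification is being imposed on the reduced problem (VP$_{E}$) at $\overline{y}$, so that the classical theorem applies verbatim, and that the notational convention for $\nabla$ in \eqref{31} is consistent with the $E$-differentiability setup. Once these two points are matched, the statement is a direct transcription of the classical KKT conditions for (VP$_{E}$) to the $E$-framework.
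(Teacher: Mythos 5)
Your proposal is correct: this theorem is not proved in the paper at all (it is quoted from \cite{AntczakNajeeb2020}), and the cited source establishes it exactly as you describe, by passing to the associated differentiable problem (VP$_{E}$) via Definition \ref{def E-differentiability} and applying the classical Kuhn--Tucker necessary optimality conditions for weak Pareto solutions, with $\nabla f_{i}(E(\overline{y}))$ read as $\nabla(f_{i}\circ E)(\overline{y})$. Your two ``bookkeeping'' caveats (the constraint qualification being imposed on (VP$_{E}$), and the gradient notation) are the right things to check and are indeed how the statement is meant.
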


\begin{definition}
\label{def KKT point} $\left( E(\overline{y}),\overline{\tau },\overline{\rho
},\overline{\xi }\right) \in \Omega\times R^{p}\times R^{m}\times R^{q}$
is said to be an $E$-Karush-Kuhn-Tucker point ($E$-KKT point) for
(VP) if the relations (\ref%
{31})-(\ref{33}) are satisfied at $E(\overline{y})$ with Lagrange multipliers $%
\overline{\tau }$, $\overline{\rho  }$, $\overline{\xi }$.
\end{definition}

Now, we prove the sufficiency of the $E$-Karush-Kuhn-Tucker necessary
optimality conditions for the considered $E$-differentiable vector
optimization problem (VP) under exponentially $E$-invexity hypotheses.

\begin{theorem}
\label{theorem sufficiency weak Pareto under E-convexity}Let $\left(
E(\overline{y}),\overline{\tau },\overline{\rho  },\overline{\xi }\right) \in
\Omega \times R^{p}\times R^{m}\times R^{q}$ be an $E$-KKT point
of (VP). Let $J_{E}^{+}\left(
E\left( \overline{y}\right) \right) =\left\{ j\in J:\overline{\xi }%
_{j}>0\right\} $ and $J_{E}^{-}\left( E\left( \overline{y}\right) \right)
=\left\{ j\in J:\overline{\xi }_{j}<0\right\} $. Furthermore, assume the
following hypotheses are fulfilled:

\begin{enumerate}
\item[a)]  $f_{i}$, $i\in I$, is  $E$-differentiable exponentially $E$-invex function at $\overline{y}$ on $\Omega_{E}$,

\item[b)] $g_{k}$, $k\in K\left(E( \overline{y})\right) $, is  $E$-differentiable exponentially $E$-invex function at $\overline{y}$ on $\Omega_{E}$,

\item[c)] $h_{j}$, $j\in J^{+}\left( E\left(
\overline{y}\right) \right) $, is  $E$-differentiable exponentially $E$-invex function at $\overline{y}$ on $\Omega_{E}$,

\item[d)]  $-h_{j}$, $j\in J^{-}\left( E\left( \overline{y}%
\right) \right) $, is  $E$-differentiable exponentially $E$-invex function at $\overline{y}$ on $\Omega_{E}$.
\end{enumerate}

Then $E\left( \overline{y}\right) $ is a weak $E$-Pareto solution of (VP).
\end{theorem}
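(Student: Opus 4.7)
I would prove this by contradiction, adapting the classical KKT sufficiency argument to the exponential $E$-invexity setting. Suppose $E(\overline{y})$ is not a weak $E$-Pareto solution; then by Definition \ref{def weak Pareto point} there exists a feasible $E(x)$ with $f_i(E(x)) < f_i(E(\overline{y}))$ for every $i\in I$. Since $t\mapsto e^{t}$ is strictly increasing, this translates into
\begin{equation*}
e^{f_i(E(x))} < e^{f_i(E(\overline{y}))}, \qquad i\in I.
\end{equation*}

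Next, I would feed these strict inequalities into hypothesis (a). The exponential $E$-invexity of $f_i$ at $\overline{y}$ gives
\begin{equation*}
e^{f_i(E(x))} - e^{f_i(E(\overline{y}))} \geqq \nabla f_i(E(\overline{y}))\, e^{f_i(E(\overline{y}))}\, \eta(E(x),E(\overline{y})).
\end{equation*}
The left side is strictly negative and $e^{f_i(E(\overline{y}))}>0$, so dividing by this positive scalar yields $\nabla f_i(E(\overline{y}))\,\eta(E(x),E(\overline{y}))<0$ for every $i$. Multiplying by $\overline{\tau}_i\geqq 0$ and summing, while using that $\overline{\tau}\geq 0$ forces $\overline{\tau}_{i_0}>0$ for some $i_0$, one obtains
\begin{equation*}
\sum_{i=1}^{p}\overline{\tau}_{i}\,\nabla f_i(E(\overline{y}))\,\eta(E(x),E(\overline{y})) < 0.
\end{equation*}

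The constraint terms I would handle symmetrically. Complementarity (\ref{32}) together with $g_k(E(\overline{y}))<0$ on inactive indices forces $\overline{\rho}_k=0$ there, so only $k\in K(E(\overline{y}))$ contribute. For such $k$, $g_k(E(\overline{y}))=0$ and $g_k(E(x))\leqq 0$; hypothesis (b) combined with $e^{g_k(E(\overline{y}))}=1$ then yields $\nabla g_k(E(\overline{y}))\,\eta(E(x),E(\overline{y}))\leqq 0$, and multiplication by $\overline{\rho}_k\geqq 0$ preserves the sign. For the equalities, feasibility gives $h_j(E(x))=h_j(E(\overline{y}))=0$; applying (c) on $J_{E}^{+}$ directly and (d) to $-h_j$ on $J_{E}^{-}$ produces, after accounting for the sign of $\overline{\xi}_j$, the uniform bound $\overline{\xi}_j\,\nabla h_j(E(\overline{y}))\,\eta(E(x),E(\overline{y}))\leqq 0$ in both cases.

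Adding the three nonpositive sums and combining with the stationarity identity (\ref{31}) delivers $0<0$, the desired contradiction, so $E(\overline{y})$ must be a weak $E$-Pareto solution. The step I expect to be most delicate is the $J_{E}^{-}$ case, since one must apply the invexity inequality to $-h_j$ rather than $h_j$ and then carefully track that the sign flip contributed by the negative multiplier $\overline{\xi}_j$ restores the same direction as in the $J_{E}^{+}$ case; everything else is bookkeeping once the weight $e^{f(\cdot)}$ is recognized as a strictly positive scalar that may be divided out of the invexity inequality.
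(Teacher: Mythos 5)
Your proposal is correct and follows essentially the same route as the paper's own proof: argue by contradiction, feed the strict objective decrease and the feasibility/complementarity relations into the exponential $E$-invexity inequalities at $\overline{y}$, track the signs of the multipliers (including the sign flip on $J_{E}^{-}$ via $-h_j$), and sum to contradict the stationarity condition (\ref{31}). If anything, you are more explicit than the paper about why the positive weights $e^{f_i(E(\overline{y}))}$, $e^{g_k(E(\overline{y}))}=1$, $e^{\pm h_j(E(\overline{y}))}=1$ can be divided out and about why only active and supported indices contribute, which are exactly the steps the paper leaves implicit.
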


\begin{proof}
By assumption, $\left( E(\overline{y}),\overline{\tau },\overline{\rho  },%
\overline{\xi }\right) \in \Omega \times R^{p}\times R^{m}\times R^{q}$ is an
$E$-KKT point of (VP). Then, by Definition \ref{def KKT point}, the relations (\ref{31})-(\ref{33}) are satisfied at $%
E(\overline{y})$ with Lagrange multipliers $\overline{\tau }\in R^{p}$, $%
\overline{\rho  }\in R^{m}$ and $\overline{\xi }\in R^{q}$. We proceed by
contradiction. Assume, contrary to the conclusion, that  $E(\overline{y})$ is not a
weak $E$-Pareto solution of (VP). Hence, by Definition \ref%
{def weak Pareto point}, there exists another $E(x^{\star})\in \Omega $
such that%
\begin{equation}
f(E\left( x^{\star}\right) )<f\left( E\left( \overline{y}\right) \right)
\text{.}  \label{37}
\end{equation}%
Using hypotheses a)-d), by Definition \ref{def e-E-convex function} and Definition \ref{d differentiable ex E-invexity}, the following
inequalities%
\begin{equation}
e^{f_{i}\left( E\left( x^{\star}\right) \right)} -e^{f_{i}\left( E\left(
\overline{y}\right) \right)} \geqq \nabla f_{i}\left( E\left( \overline{y}\right) \right)e^{f_{i}\left( E\left(
\overline{y}\right) \right)} \eta\left( E\left( x^{\star}\right),E\left( \overline{y}\right) \right) \text{, }i\in I\text{,}  \label{38}
\end{equation}%
\begin{equation}
e^{g_{k}(E\left( x^{\star}\right) )}-e^{g_{k}(E\left( \overline{y}\right))}
\geqq \nabla g_{k}\left( E\left( \overline{y}\right) \right) e^{g_{k}(E\left( \overline{y}\right))}
 \eta\left( E\left( x^{\star}\right),E\left( \overline{y}\right) \right) \text{, }k\in K\left( E\left( \overline{y}\right) \right) \text{,}  \label{39}
\end{equation}
\begin{equation}
e^{h_{j}(E\left( x^{\star}\right) )}-e^{h_{j}(E\left( \overline{y}\right))}
\geqq \nabla h_{j}\left( E\left( \overline{y}\right) \right) e^{h_{j}(E\left( \overline{y}\right))}
 \eta\left( E\left( x^{\star}\right),E\left( \overline{y}\right) \right) \text{, }j\in J^{+}\left( E\left( \overline{y}\right) \right) \text{,}  \label{40}
\end{equation}%
\begin{equation}
-e^{h_{j}(E\left( x^{\star}\right) )}+e^{h_{j}(E\left( \overline{y}\right)
)}\geqq -\nabla h_{j}\left( E\left( \overline{y}\right) \right)e^{h_{j}(E\left( \overline{y}\right))}
 \eta\left( E\left( x^{\star}\right),E\left( \overline{y}\right) \right) \text{, \ }%
j\in J^{-}\left( E\left( \overline{y}\right) \right)  \label{41}
\end{equation}%
hold, respectively. Combining (\ref{37}) and (\ref{38}) and then multiplying
the resulting inequalities by the corresponding Lagrange multipliers and
adding both their sides, we get%
\begin{equation}
\left[ \sum_{i=1}^{p}\overline{\tau }_{i}\nabla \left( f_{i}\circ
E\right) \left( \overline{y}\right) \right]  \eta\left( E\left( x^{\star}\right),E\left( \overline{y}\right) \right) <0\text{.}  \label{42}
\end{equation}%
Multiplying inequalities (\ref{39})-(\ref{41}) by the corresponding Lagrange
multipliers, respectively, we obtain%
\begin{equation}
\overline{\rho  }_{k}e^{g_{k}(E\left( x^{\star}\right) )}-\overline{\rho  }%
_{k}e^{g_{k}(E\left( \overline{y}\right) )}\geqq \overline{\rho  }_{k}\nabla
g_{k}\left( E\left( \overline{y}\right) \right) e^{g_{k}(E\left( \overline{y}\right) )}
\eta\left( E\left( x^{\star}\right),E\left( \overline{y}\right) \right) \text{, }k\in K\left( E\left(
\overline{y}\right) \right) \text{,}  \label{43}
\end{equation}%
\begin{equation}
\overline{\xi }_{j}e^{h_{j}(E\left( x^{\star}\right) )}-\overline{\xi }%
_{j}e^{h_{j}(E\left( \overline{y}\right) )}\geqq \overline{\xi }_{j}\nabla
h_{j}\left( E\left( \overline{y}\right) \right) e^{h_{j}(E\left( \overline{y}\right) )}
\eta\left( E\left( x^{\star}\right),E\left( \overline{y}\right) \right) \text{, }j\in J^{+}\left(
E\left( \overline{y}\right) \right) \text{,}  \label{44}
\end{equation}%
\begin{equation}
\overline{\xi }_{j}e^{h_{j}(E\left( x^{\star}\right) )}-\overline{\xi }%
_{j}e^{h_{j}(E\left( \overline{y}\right) )}\geqq \overline{\xi }_{j}\nabla
h_{j}\left( E\left( \overline{y}\right) \right)e^{h_{j}(E\left( \overline{y}\right) )}
 \eta\left( E\left( x^{\star}\right),E\left( \overline{y}\right) \right) \text{, \ }j\in J^{-}\left(
E\left( \overline{y}\right) \right) \text{.}  \label{45}
\end{equation}%
Using the  condition (\ref{32})
together with $E(x^{\star})\in \Omega$ and $E(\overline{y})\in \Omega
 $, we get, respectively,%
\begin{equation}
\overline{\rho  }_{k}\nabla g_{k}\left( E\left( \overline{y}\right) \right)
\eta\left( E\left( x^{\star}\right),E\left( \overline{y}\right) \right)
\leqq 0\text{, }k\in K\left( E\left( \overline{y}\right) \right) \text{,}
\label{46}
\end{equation}%
\begin{equation}
\overline{\xi }_{j}\nabla h_{j}\left( E\left( \overline{y}\right) \right)
\eta\left( E\left( x^{\star}\right),E\left( \overline{y}\right) \right)
\leqq 0\text{, }j\in J^{+}\left( E\left( \overline{y}\right) \right) \text{,}
\label{47}
\end{equation}%
\begin{equation}
\overline{\xi }_{j}\nabla h_{j}\left( E\left( \overline{y}\right) \right)
\eta\left( E\left( x^{\star}\right),E\left( \overline{y}\right) \right)
\leqq 0\text{, \ }j\in J^{-}\left( E\left( \overline{y}\right) \right) \text{%
.}  \label{48}
\end{equation}%
Combining (\ref{42}) and (\ref{46})-(\ref{48}), we obtain that the following
inequality%
\begin{equation*}
\left[ \sum_{i=1}^{p}\overline{\tau }_{i}\nabla f_{i}(E\left( \overline{y}\right)) +
\sum_{k=1}^{m}\overline{\rho  }_{k}\nabla g_{k}\left( E\left( \overline{y}\right) \right) +\sum_{j=1}^{q}\overline{\xi
}_{j}\nabla h_{j}\left( E\left( \overline{y}\right) \right) \right] \eta\left( E\left( x^{\star}\right),E\left( \overline{y}\right) \right) <0
\end{equation*}%
holds, which is a contradiction to the  condition (\ref{31}).  Thus, the proof of this theorem is completed.
\end{proof}

If stronger $E$-differentiable exponentially $E$-invexity hypotheses are imposed on the functions constituting the considered vector optimization problems, then the sufficient optimality conditions for a feasible solution to be an  $E$-Pareto solution of the problem (VP) result is true.
\begin{theorem}
\label{theorem sufficiency Pareto under E-convexity}Let $\left( E(\overline{y}),%
\overline{\tau },\overline{\rho  },\overline{\xi }\right) \in \Omega \times
R^{p}\times R^{m}\times R^{q}$ be an $E$-KKT point of  (VP). Furthermore, assume that the
following hypotheses are fulfilled:

\begin{enumerate}
\item[a)]  $f_{i}$, $i\in I$, is  $E$-differentiable exponentially strictly $E$-invex function at $\overline{y}$ on $\Omega_{E}$,

\item[b)]  $g_{k}$, $k\in K\left( E(\overline{y})\right) $, is  $E$-differentiable exponentially $E$-invex function at $\overline{y}$ on $\Omega_{E}$,

\item[c)]  $h_{j}$, $j\in J^{+}\left( E\left(\overline{y}\right) \right) $, is  $E$-differentiable exponentially $E$-invex function at $\overline{y}$ on $\Omega_{E}$,

\item[d)] $-h_{j}$, $j\in J^{-}\left( E\left( \overline{y}%
\right) \right) $, is  $E$-differentiable exponentially $E$-invex function at $\overline{y}$ on $\Omega_{E}$.
\end{enumerate}
Then  $E\left( \overline{y}\right) $ is an $E$-Pareto solution of (VP).
\end{theorem}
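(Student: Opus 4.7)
The plan is to follow the template of the preceding Theorem \ref{theorem sufficiency weak Pareto under E-convexity} almost verbatim, contradicting the $E$-KKT stationarity relation (\ref{31}), but to replace the place where strictness of the weak-Pareto comparison was used with strictness coming from the hypothesis that each $f_i$ is \emph{strictly} exponentially $E$-invex. First I would argue by contradiction: suppose $E(\overline{y})$ is not an $E$-Pareto solution of (VP). By Definition \ref{def Pareto point} there exists $x^{\star}\in\Omega_{E}$ with $f(E(x^{\star}))\leq f(E(\overline{y}))$, so $f_{i}(E(x^{\star}))\leqq f_{i}(E(\overline{y}))$ for every $i\in I$ with strict inequality for at least one index $i_{0}$. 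Since $E$ is one-to-one, the existence of this strict coordinate forces $x^{\star}\neq\overline{y}$, which is exactly what is needed to legitimately invoke the strict variant in Definition \ref{d differentiable st ex E-invexity}.

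Next I would turn the functional inequality into a stationarity inequality. Exponentiating, $e^{f_{i}(E(x^{\star}))}-e^{f_{i}(E(\overline{y}))}\leqq 0$ for all $i$. By hypothesis a) and the strict exponentially $E$-invexity inequality at $\overline{y}$, with $x^{\star}\neq\overline{y}$, we obtain
\begin{equation*}
\nabla f_{i}\!\left(E(\overline{y})\right)e^{f_{i}(E(\overline{y}))}\,\eta\!\left(E(x^{\star}),E(\overline{y})\right)<e^{f_{i}(E(x^{\star}))}-e^{f_{i}(E(\overline{y}))}\leqq 0
\end{equation*}
for every $i\in I$. Since $e^{f_{i}(E(\overline{y}))}>0$, dividing out this positive factor gives $\nabla f_{i}(E(\overline{y}))\,\eta(E(x^{\star}),E(\overline{y}))<0$ for every $i$. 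Multiplying by $\overline{\tau}_{i}\geqq 0$ and summing, and using the fact that $\overline{\tau}\geq 0$ forces at least one $\overline{\tau}_{i}$ to be positive, I would conclude
\begin{equation*}
\left[\sum_{i=1}^{p}\overline{\tau}_{i}\nabla f_{i}(E(\overline{y}))\right]\eta\!\left(E(x^{\star}),E(\overline{y})\right)<0.
\end{equation*}

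The remainder of the argument is essentially a transcription of the constraint handling from the previous theorem: using hypotheses b)-d) together with $E(x^{\star})\in\Omega$, $E(\overline{y})\in\Omega$, and the complementarity relation (\ref{32}), one obtains, for the active inequality and the two classes of equality constraints,
\begin{equation*}
\overline{\rho}_{k}\nabla g_{k}(E(\overline{y}))\,\eta\leqq 0,\qquad \overline{\xi}_{j}\nabla h_{j}(E(\overline{y}))\,\eta\leqq 0\quad(j\in J^{+}\cup J^{-}),
\end{equation*}
after again cancelling the positive exponential factors. Adding these weak inequalities to the strict inequality obtained for the objective part yields
\begin{equation*}
\left[\sum_{i=1}^{p}\overline{\tau}_{i}\nabla f_{i}+\sum_{k=1}^{m}\overline{\rho}_{k}\nabla g_{k}+\sum_{j=1}^{q}\overline{\xi}_{j}\nabla h_{j}\right](E(\overline{y}))\,\eta\!\left(E(x^{\star}),E(\overline{y})\right)<0,
\end{equation*}
which contradicts the $E$-KKT equation (\ref{31}).

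The only genuinely new ingredient compared with Theorem \ref{theorem sufficiency weak Pareto under E-convexity} is the strict invexity step, so the main technical obstacle is verifying that $x^{\star}\neq\overline{y}$ in order to apply Definition \ref{d differentiable st ex E-invexity}, and then carefully tracking that the strict inequality survives multiplication by nonnegative Lagrange multipliers (which requires at least one $\overline{\tau}_{i}>0$, guaranteed by $\overline{\tau}\geq 0$). The positive factor $e^{f_{i}(E(\overline{y}))}$ presents no difficulty since it can be divided out without disturbing signs, and the remainder is routine bookkeeping.
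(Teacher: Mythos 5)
Your proof is correct and is precisely the adaptation of the weak-Pareto argument that the paper has in mind (its own proof of this theorem is omitted with a reference to Theorem \ref{theorem sufficiency weak Pareto under E-convexity}). You correctly identify and handle the one genuinely new point: since the Pareto comparison $f(E(x^{\star}))\leq f(E(\overline{y}))$ is no longer strict in every component, the strict inequality $\left[\sum_{i}\overline{\tau}_{i}\nabla f_{i}(E(\overline{y}))\right]\eta<0$ must instead come from the strict exponential $E$-invexity of each $f_{i}$, which requires verifying $x^{\star}\neq\overline{y}$ and that at least one $\overline{\tau}_{i}>0$.
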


\begin{proof}
The proof of this theorem is similar to the proof of Theorem \ref{theorem sufficiency weak Pareto under E-convexity} and is omitted.
\end{proof}

\begin{remark}
\label{remark sufficient quasi Econvex}According to the proof of Theorem \ref{theorem sufficiency weak Pareto under E-convexity}, the
sufficient conditions are also satisfied if  $%
g_{k}$, $k\in K_{E}\left( \overline{y}\right) $, $h_{j}$, $j\in J^{+}\left(
E\left( \overline{y}\right) \right) $, $-h_{j}$, $j\in J^{-}\left( E\left(
\overline{y}\right) \right) $, are  $E$-differentiable exponentially quasi-$E$-invex function at $\overline{y}$ on $\Omega_{E}$.
\end{remark}

Now, under the concepts of generalized $E$-differentiable exponentially $E$-invexity, we prove the sufficient optimality conditions for a feasible solution to be a weak $E$-Pareto solution of the problem (VP).

\begin{theorem}
\label{theorem sufficiency weak Pareto under generalized ex E-invexity} Let $%
\left( E(\overline{y}),\overline{\tau },\overline{\rho  },\overline{\xi }%
\right) \in \Omega \times R^{p}\times R^{m}\times R^{q}$ be an
$E$-KKT point of  (VP).
Furthermore, assume that the following hypotheses are fulfilled:

\begin{enumerate}
\item[a)]  $f_{i}$, $i\in I$, is $E$-differentiable exponentially pseudo-$E$-invex function at $\overline{y}$ on $\Omega_{E}$,

\item[b)]  $g_{k}$, $k\in K\left( E(\overline{y})%
\right) $, is  $E$-differentiable exponentially quasi-$E$-invex function at $\overline{y}$ on $\Omega_{E}$,

\item[c)]  $h_{j}$, $j\in J^{+}\left( E\left(
\overline{y}\right) \right) $, is  $E$-differentiable exponentially quasi-$E$-invex function at $\overline{y}$ on $\Omega_{E}$,

\item[d)] $-h_{j}$, $j\in J^{-}\left( E\left( \overline{y}%
\right) \right) $, is  $E$-differentiable exponentially quasi-$E$-invex function at $\overline{y}$ on $\Omega_{E}$.
\end{enumerate}
Then  $E\left( \overline{y}\right) $ is a weak $E$-Pareto solution of (VP).
\end{theorem}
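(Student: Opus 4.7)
The plan is to mirror the contradiction argument used in Theorem~\ref{theorem sufficiency weak Pareto under E-convexity}, replacing the exponential $E$-invexity of the objectives $f_i$ by the pseudo-$E$-invex implication and the exponential $E$-invexity of the constraints by the quasi-$E$-invex implication. Suppose, for contradiction, that $E(\overline{y})$ is not a weak $E$-Pareto solution of (VP). By Definition~\ref{def weak Pareto point} there exists a feasible $E(x^{\star})\in\Omega$ with $f_i(E(x^{\star}))<f_i(E(\overline{y}))$ for every $i\in I$; monotonicity of $e^{(\cdot)}$ then gives $e^{f_i(E(x^{\star}))}<e^{f_i(E(\overline{y}))}$ for every $i$.

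Next I would translate these into directional inequalities at $\overline{y}$ via the assumed generalized invexity. Hypothesis a) together with Definition~\ref{def pseudo e-E-convex function} yields $\nabla f_i(E(\overline{y}))\,e^{f_i(E(\overline{y}))}\,\eta(E(x^{\star}),E(\overline{y}))<0$ for every $i\in I$. For $k\in K(E(\overline{y}))$, feasibility of $E(x^{\star})$ gives $g_k(E(x^{\star}))\leqq 0 = g_k(E(\overline{y}))$, and hypothesis b) with Definition~\ref{def quasi E-convex function E-diff} produces $\nabla g_k(E(\overline{y}))\,e^{g_k(E(\overline{y}))}\,\eta(E(x^{\star}),E(\overline{y}))\leqq 0$. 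For $j\in J^{+}(E(\overline{y}))$, the identity $h_j(E(x^{\star}))=0=h_j(E(\overline{y}))$ combined with hypothesis c) produces $\nabla h_j(E(\overline{y}))\,\eta(E(x^{\star}),E(\overline{y}))\leqq 0$; applying hypothesis d) to $-h_j$ for $j\in J^{-}(E(\overline{y}))$ flips the sign and gives $\nabla h_j(E(\overline{y}))\,\eta(E(x^{\star}),E(\overline{y}))\geqq 0$.

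Finally I would multiply each of these bounds by the matching Lagrange multiplier and add. Because $\overline{\tau}\geq 0$ forces some $\overline{\tau}_{i_0}>0$ and the $f_i$ bound is strict, the (positive) exponential factors drop and $\sum_{i}\overline{\tau}_i\nabla f_i(E(\overline{y}))\eta(E(x^{\star}),E(\overline{y}))<0$. Complementary slackness~(\ref{32}) kills $\overline{\rho}_k$ outside $K(E(\overline{y}))$, leaving $\sum_{k}\overline{\rho}_k\nabla g_k(E(\overline{y}))\eta(E(x^{\star}),E(\overline{y}))\leqq 0$. Multiplying the $J^{+}$ bound by $\overline{\xi}_j>0$ and the $J^{-}$ bound by $\overline{\xi}_j<0$ makes both contributions nonpositive. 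Adding all of these yields
\begin{equation*}
\Bigl[\sum_{i}\overline{\tau}_i\nabla f_i(E(\overline{y}))+\sum_{k}\overline{\rho}_k\nabla g_k(E(\overline{y}))+\sum_{j}\overline{\xi}_j\nabla h_j(E(\overline{y}))\Bigr]\eta(E(x^{\star}),E(\overline{y}))<0,
\end{equation*}
which contradicts the $E$-KKT stationarity~(\ref{31}). The only bookkeeping subtlety, and the main obstacle worth checking, is the $J^{-}$ case: the hypothesis is imposed on $-h_j$ rather than $h_j$ precisely so that the induced sign flip of $\nabla h_j(E(\overline{y}))\eta$ pairs with the negative multiplier $\overline{\xi}_j$ to keep the summand nonpositive and align it with the $J^{+}$ contribution.
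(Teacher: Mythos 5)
Your proposal is correct and follows essentially the same contradiction argument as the paper's own proof: exponentiate the strict objective inequality, apply the pseudo-$E$-invex implication to the $f_i$ and the quasi-$E$-invex implication to the active $g_k$ and to $\pm h_j$, then weight by the multipliers (using complementary slackness and the sign conventions on $J^{+}$ and $J^{-}$) to contradict the stationarity condition~(\ref{31}). No gaps; your explicit remark that some $\overline{\tau}_{i_0}>0$ is needed to make the objective contribution strictly negative is a point the paper leaves implicit.
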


\begin{proof}
By assumption, $\left( E(\overline{y}),\overline{\tau },\overline{\rho  },\overline{\xi }%
\right) \in \Omega \times R^{p}\times R^{m}\times R^{q}$ is
a Karush-Kuhn-Tucker point in the considered constrained $E$-optimization problem
(VP). Then, by
Definition \ref{def KKT point}, the Karush-Kuhn-Tucker necessary
optimality conditions (\ref{31})-(\ref{33}) are satisfied at $\overline{y}$
with Lagrange multipliers $\overline{\tau }\in R^{p}$, $\overline{\rho  }%
\in R^{m}$ and $\overline{\xi }\in R^{q}$. We proceed by contradiction.
Suppose, contrary to the result, that $E(\overline{y})$ is not a weak Pareto
solution in problem (VP). Hence, by Definition \ref{def Pareto point}, there exists another $E(x^{\curlywedge})\in \Omega$ such that%
\begin{equation}
f_{i}(E(x^{\curlywedge}))< f_{i}\left( E\left( \overline{y}\right) \right)
\text{, }i\in I\text{.}
\end{equation}%
Thus,
\begin{equation}
e^{f_{i}(E(x^{\curlywedge}))}< e^{f_{i}\left( E\left( \overline{y}\right) \right)}
\text{, }i\in I\text{.}  \label{26.a}
\end{equation}%
By hypothesis (a), the objective function $f$ is $E$-differentiable exponentially
 pseudo-$E$-invex  at $E(\overline{y})$ on $\Omega$. Then, \eqref{26.a} gives%
\begin{equation}
\nabla \left( f_{i}\circ E\right) \left( \overline{y}\right)  e^{f_{i}\left( E\left( \overline{y}\right) \right)}
\eta\left(E\left( x^{\curlywedge}\right), E\left( \overline{y}\right) \right) < 0\text{, }%
i\in I\text{,}  \label{28}
\end{equation}%
By the $E$-Karush-Kuhn-Tucker necessary optimality condition (\ref{33}) and $e^{f\left( E\left( \overline{y}\right) \right)}>0,$
inequality (\ref{28})  yields%
\begin{equation}
\left[ \sum_{i=1}^{p}\overline{\tau }_{i}\nabla \left( f_{i}\circ
E\right) \left( \overline{y}\right) \right] \eta\left(E\left( x^{\curlywedge}\right), E\left( \overline{y}\right) \right) <0\text{.}  \label{30a}
\end{equation}%
Since $E(x^{\curlywedge})\in \Omega $, $E(\overline{y})\in \Omega $ , therefore, the $E$-Karush-Kuhn-Tucker necessary optimality
conditions (\ref{32}) and (\ref{33}) imply%
\begin{equation*}
g_{k}(E\left( x^{\curlywedge}\right) )-g_{k}(E\left( \overline{y}\right)
)\leqq 0\text{, \ }k\in K\left( E\left( \overline{y}\right) \right) \text{.}
\end{equation*}%
Thus,
\begin{equation*}
e^{g_{k}(E\left( x^{\curlywedge}\right) )}\leqq e^{g_{k}(E\left( \overline{y}\right)
)}\text{, \ }k\in K\left( E\left( \overline{y}\right) \right) \text{.}
\end{equation*}%
From the assumption, each $g_{k}$, $k\in K$, is an $E$-differentiable exponentially quasi-$E$-invex function at $\overline{y}$ on $ \Omega_{E}$. Then, by Definition \ref{def quasi E-convex function E-diff}, we get%
\begin{equation}
\nabla g_{k}\left( E\left( \overline{y}\right) \right) e^{g_{k}\left( E\left( \overline{y}\right) \right)}
\eta\left(E\left( x^{\curlywedge}\right), E\left( \overline{y}\right) \right) \leqq 0\text{, \ }%
k\in K\left( E\left( \overline{y}\right) \right) \text{.}  \label{31a}
\end{equation}%
Thus, by the $E$-Karush-Kuhn-Tucker necessary optimality condition (\ref{33}), $e^{g\left( E\left( \overline{y}\right) \right)}>0,$
 and by Definition \ref{def quasi E-convex function E-diff}, (\ref{31a}) gives%
\begin{equation*}
\sum_{k\in K\left( E\left( \overline{y}\right) \right) }\overline{\rho  }%
_{k}\nabla g_{k}\left( E\left( \overline{y}\right) \right)
\eta\left(E\left( x^{\curlywedge}\right), E\left( \overline{y}\right) \right) \leqq 0\text{.}
\end{equation*}%
Hence, taking into account $\overline{\rho  }_{k}=0$, $k\notin K\left( E\left(
\overline{y}\right) \right) $, we have%
\begin{equation}
\sum_{k=1}^{m}\overline{\rho  }_{k}\nabla g_{k}\left( E\left( \overline{y}%
\right) \right)
\eta\left(E\left( x^{\curlywedge}\right), E\left( \overline{y}\right) \right)\leqq 0\text{.}  \label{32a}
\end{equation}%
From $E(x^{\curlywedge})\in \Omega $, $E(\overline{y})\in \Omega $, we get%
\begin{equation}
h_{j}(E\left( x^{\curlywedge}\right) )-h_{j}(E\left( \overline{y}\right) )=0%
\text{, }j\in J^{+}\left( E\left( \overline{y}\right) \right) \text{,}
\end{equation}%
\begin{equation}
-h_{j}(E\left( x^{\curlywedge}\right) )-\left( -h_{j}(E\left( \overline{y}%
\right) )\right) =0\text{, }j\in J^{-}\left( E\left( \overline{y}\right)
\right) \text{.}
\end{equation}%
Thus,
\begin{equation}
e^{h_{j}(E\left( x^{\curlywedge}\right) )}-e^{h_{j}(E\left( \overline{y}\right) )}=0%
\text{, }j\in J^{+}\left( E\left( \overline{y}\right) \right) \text{,}
\label{33a}
\end{equation}%
\begin{equation}
-e^{h_{j}(E\left( x^{\curlywedge}\right) )}-\left( -e^{h_{j}(E\left( \overline{y}%
\right) )}\right) =0\text{, }j\in J^{-}\left( E\left( \overline{y}\right)
\right) \text{.}  \label{34a}
\end{equation}%
Since each equality constraint $h_{j}$, $j\in J^{+}\left( E\left( \overline{x%
}\right) \right) $, and each function $-h_{j}$, $j\in J^{-}\left( E\left(
\overline{y}\right) \right) $, is an $E$-differentiable exponentially quasi-$E$-invex function at $\overline{y}$ on $\Omega_{E}$, then by Definition \ref{def quasi E-convex function E-diff}, $e^{h(E\left( \overline{y}\right))}>0,$ (\ref{33a}) and (\ref{34a})
give, respectively,%
\begin{equation}
\nabla h_{j}\left( E\left( \overline{y}\right) \right)
\eta\left(E\left( x^{\curlywedge}\right), E\left( \overline{y}\right) \right)\leqq 0\text{,\ }%
j\in J^{+}\left( E\left( \overline{y}\right) \right) \text{,}  \label{35a}
\end{equation}%
\begin{equation}
-\nabla h_{j}\left( E\left( \overline{y}\right) \right)
\eta\left(E\left( x^{\curlywedge}\right), E\left( \overline{y}\right) \right) \leqq 0\text{,\ }%
j\in J^{-}\left( E\left( \overline{y}\right) \right) \text{.}  \label{36a}
\end{equation}%
Thus, (\ref{35a}) and (\ref{36a}) yield%
\begin{equation*}
\bigg[ \sum_{j\in J^{+}\left( E\left( \overline{y}\right) \right) }\overline{%
\xi }_{j}\nabla h_{j}\left( E\left( \overline{y}\right) \right)
+\sum_{j\in
J^{-}\left( E\left( \overline{y}\right) \right) }\overline{\xi }_{t}\nabla
h_{j}\left( E\left( \overline{y}\right) \right) \bigg]
\eta\left(E\left( x^{\curlywedge}\right), E\left( \overline{y}\right) \right) \leqq 0\text{.}
\end{equation*}%
Hence, taking into account $\overline{\xi }_{j}=0$, $j\notin J^{+}\left(
E\left( \overline{y}\right) \right) \cup J^{-}\left( E\left( \overline{y}%
\right) \right) $, we have%
\begin{equation}
\sum_{j=1}^{q}\overline{\xi }_{j}\nabla h_{j}\left( E\left( \overline{y}%
\right) \right)
\eta\left(E\left( x^{\curlywedge}\right), E\left( \overline{y}\right) \right) \leqq 0\text{.}  \label{37a}
\end{equation}%
Combining (\ref{30a}), (\ref{32a}) and (\ref{37a}), we get that the following
inequality%
\begin{equation*}
\bigg[ \sum_{i=1}^{p}\overline{\tau}_{i}\nabla f_{i}(E\left( \overline{y}\right)) +\sum_{k=1}^{m}\overline{\rho  }_{k}\nabla g_{k}\left( E\left( \overline{%
x}\right) \right)
+\sum_{j=1}^{q}\overline{\xi }_{j}\nabla h_{j}\left(
E\left( \overline{y}\right) \right) \bigg]
\eta\left(E\left( x^{\curlywedge}\right), E\left( \overline{y}\right) \right) <0\text{}
\end{equation*}%
which is a contradiction to the $E$-Karush-Kuhn-Tucker necessary
optimality condition (\ref{31}). Thus, the proof of this theorem is
completed.
\end{proof}

In order to illustrate the above sufficient optimality conditions, we now present an example of an $E$-differentiable problem in which the involved functions are exponentially (generalized) $E$-invex.

\begin{example}
\label{example2}Consider the following nonconvex nondifferentiable vector
optimization problem
\begin{equation*}
\begin{array}{c}
\text{}f(x)=(\log(\sqrt[3]{x_{1}}+\sqrt[3]{x_{2}}+1)\text{ },
\text{ }\log(\sqrt[3]{x_{1}}+\sqrt[3]{x_{2}}+2))\;\; \rightarrow V-\min \medskip \\
\text{s.t. }g_{1}(x)=-\sqrt[3]{x_{1}}\leqq 0,\text{ \ }%
\medskip \\
g_{2}(x)=-\sqrt[3]{x_{2}}\leqq 0\text{.}%
\end{array}%
\text{ \ \ (VP1)}
\end{equation*}%
Note that $\Omega =\left\{ \left( x_{1},x_{2}\right) \in R^{2}:-\sqrt[3]{x_{1}}\leqq 0\text{
}\wedge \text{ }-\sqrt[3]{x_{2}}\leqq 0\right\}$. Let $E:R^{2}\rightarrow R^{2}$ be defined as
follows $E\left( x_{1},x_{2}\right) =\left( x_{1}^{3},x_{2}^{3}\right),$  $\eta(E(x),E(\overline{y}))=(1-e^{\overline{y}_{1}-x_{1}}, 1-e^{\overline{y}_{2}-x_{2}}).$ Now,
for the considered nonconvex nondifferentiable multiobjective programming
problem (VP1), we define its associated differentiable optimization problem (VP1$%
_{E}$) as follows%
\begin{equation*}
\begin{array}{c}
\text{}(f\circ E)(x)=(\log({x_{1}}+{x_{2}}+1)\text{ },
\text{ }\log({x_{1}}+{x_{2}}+2))\;\;\rightarrow V-\min\medskip \\
\text{s.t. }g_{1}(E(x))=-x_{1}\leqq 0,\text{ \ \ }\medskip \\
g_{2}(E(x))=-x_{2}\leqq  0\text{.}%
\end{array}%
\text{ \ \ (VP1}_{E}\text{)}
\end{equation*}%
Note that $\Omega _{E}=\left\{ \left( x_{1},x_{2}\right) \in
R^{2}:x_{1}\geqq 0\text{ }\wedge \text{ }x_{2}\geqq 0\right\} $ and $\overline{y}=\left( 0,0\right) $ is
the set of all feasible solutions of the problem (VP1$_{E}$). Further, note that all
functions constituting the considered vector optimization problem (VP1) are $%
E$-differentiable at $\overline{y}=\left( 0,0\right) $. Then, it can also be
shown that the $E$-Karush-Kuhn-Tucker necessary optimality conditions (\ref%
{31})-(\ref{33}) are fulfilled at $\overline{y}=\left( 0,0\right) $ with
Lagrange multipliers $\overline{\tau}_{1}=\frac{1}{2}$, $\overline{%
\tau }_{2}=\frac{1}{2}$ and $\overline{\rho}_{1}=\overline{\rho}_{2}=1$. Further, it can be proved that $f$ is an $E$-differentiable exponentially pseudo-$E$-invex
function at $\overline{y}$ on $\Omega _{E}$,  the constraint function $g_{1}$ and $g_{2}$ are an $E$-differentiable exponentially  quasi-$E$-invex function at $\overline{y}$ on $\Omega _{E}$.
Hence, by Theorem \ref{theorem sufficiency weak Pareto under generalized ex E-invexity}, $E(\overline{y})=\left(
0,0\right) $ is an $E$-Pareto solution of the optimization problem (VP).

\begin{figure}[ht]
  \begin{minipage}[b]{0.49\linewidth}
    \centering
    \includegraphics[width=\linewidth]{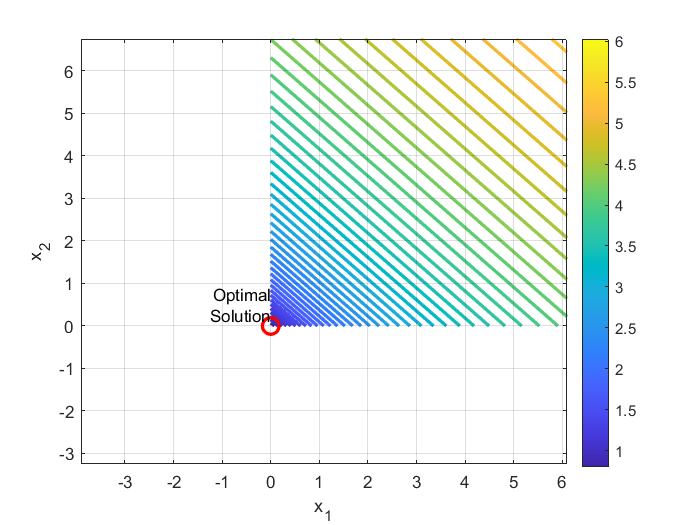}
  \end{minipage}
  \begin{minipage}[b]{0.5\linewidth}
    \centering
    \includegraphics[width=\linewidth]{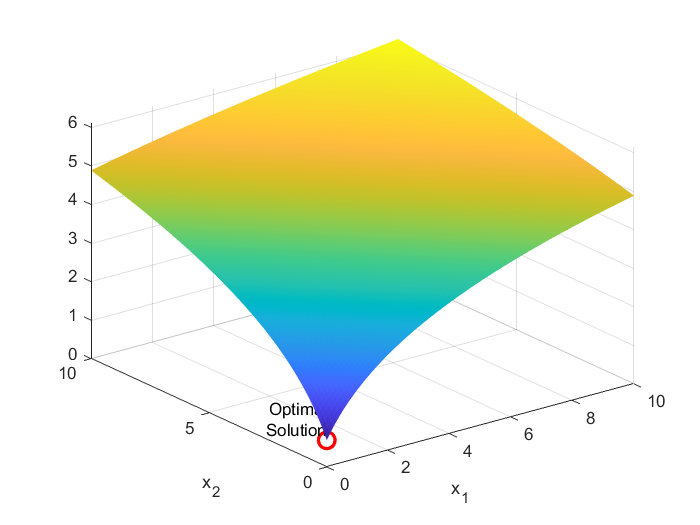}
  \end{minipage}
  \caption{Graphical view of (VP1$_{E}$).}
\end{figure}
\end{example}

\begin{remark}
Note that we are not able to use the optimality conditions for differentiable multiobjective programming problems in order to find efficient solutions in the vector optimization problem (VP1) considered in Example \ref{example2} since some of the involved functions  are not differentiable. Also, the sufficient optimality conditions with convexity hypotheses are not applicable for (VP1) since (VP1) is not a convex vector optimization problem.
\end{remark}

\section{Concluding remarks}
In this paper, a new class of nonconvex $E$-differentiable
vector optimization problems with both inequality and equality constraints have been considered. We have introduced the notion of exponentially $E$-invex functions, delved into their key characteristics, and expanded the framework by introducing various generalized exponentially $E$-invexity concepts.  Further, we have established sufficient optimality conditions for $E$-differentiable vector optimization problems under (generalized) exponentially $E$-invexity. To illustrate these findings, we have provided an example of nonconvex nonsmooth vector optimization problems.

However, some interesting topics for further research remain. It would be of interest to investigate whether it
is possible to prove similar results for other classes of $E$-differentiable vector optimization problems. We shall investigate these questions in subsequent papers.

\end{document}